\title{Initial data and black holes for matter models}
\author[A.Y.~Burtscher]{Annegret Y.\ Burtscher} 
\address{Department of Mathematics, 
Radboud University, PO Box 9010, Postvak 59, 6500 GL Nijmegen, The Netherlands}
\email{\href{mailto:burtscher@math.ru.nl}{burtscher@math.ru.nl}}
\numberwithin{equation}{section}
\theoremstyle{plain}
\newtheorem{theorem}{Theorem}[section]
\newtheorem{corollary}[theorem]{Corollary}
\newtheorem{lemma}[theorem]{Lemma}
\theoremstyle{definition}
\newtheorem{definition}[theorem]{Definition}
\theoremstyle{remark}
\newtheorem{remark}[theorem]{Remark}
\newcommand{\s}{{(0)}}
\newcommand{\p}{{(1)}}
\def\RR{\mathbb{R}}
\newcommand{\g}{\ensuremath{\mathbf{g}}}
\begin{document}


\begin{abstract}
 
 To observe the dynamic formation of black holes in general relativity, one essentially
 needs to prove that closed trapped surfaces form during evolution from initial data that do not already contain trapped surfaces. We discuss the recent development of the construction of such admissible initial data for matter models. In addition, we extend known results for the Einstein equations coupled to perfect fluids in spherical symmetry and with linear equation of state to unbounded domains. Polytropic equations of state and regularity issues with the direct application of the singularity theorems in general relativity are discussed briefly. 
\end{abstract}

\maketitle


\section{Introduction}
\label{intro}

The Einstein equations in general relativity, with speed of light and Newton's gravitational constant normalized to $1$, read
 \begin{align}\label{Einstein}
  G_{\mu\nu} = 8 \pi T_{\mu\nu},
 \end{align}
where the left hand side, the so-called Einstein tensor, is given in terms of the Ricci curvature and scalar curvature, $G_{\mu\nu} = R_{\mu\nu} - \frac{1}{2} R \, g_{\mu\nu}$, and the right hand side is the energy-momentum tensor of a particular matter model. Solutions to this equation are four-dimensional manifolds $M$ with Lorentzian metric tensors $\g$, describing how light and particles travel in our universe. The first results on the local existence and uniqueness of solutions (for the vacuum equations) have been obtained in 1952 by Choquet--Bruhat \cite{F}. Ever since, the global behavior of solutions is in the focus of attention.

 Singular solutions are known since the discovery of the Schwarzschild solution in 1916, however, only several decades later the systematic study of singularities and black holes has taken off. According to Penrose's Singularity Theorem from 1965 (see, e.g., \cite{HE,SG}), a spacetime $(M,g)$ is null geodesically incomplete if the following three conditions are met:
\begin{enumerate}
 \item $R_{\mu\nu} X^\mu X^\nu \geq 0$ for all null vectors $X^\mu$,
 \item there is a non-compact Cauchy surface in $M$, and
 \item there is a closed trapped surface in $M$.
\end{enumerate}

 The first two conditions are met by any reasonable matter model, as the first condition is tied to the strong energy condition. The third condition, although tangible, is difficult to verify in general circumstances. It is therefore necessary to have some control over the parameters that illustrate trapping throughout the spacetime, initially as well as during evolution. We briefly examine how this was achieved for certain matter models. We will not discuss the vacuum case here, as it differs significantly from the treatment for matter models (spherically symmetry cannot be employed due to Birkhoff's Theorem) and an excellent review has already been written by Bieri~\cite{B}.

 The first time gravitational collapse was observed in the homogeneous spherically symmetric dust model by Oppenheimer and Snyder in 1939. Initially, the dust collapses into a region $r < 2M$, and then the scalar curvature at the singularity in the center blows up. Only later, however, singularities came into the picture and the term \emph{black hole} was coined by Wheeler.
 
 In a series of papers in the 1990s, Christodoulou considered global existence, uniqueness and regularity of solutions to the Einstein equations coupled to a massless scalar field in spherical symmetry. In \cite{C1} he provided conditions on the initial data that guaranteed the formation of trapped surfaces during evolution, and also proved the weak cosmic censorship conjecture in this case \cite{C2}. The large accumulation of mass in a controlled annular region guaranteed the existence of a trapped surface, even if the initial conditions were far from containing a trapped surface (though more time is required in case the initial data are close to flat). The initial conditions are specified on a future null cone and stated in terms of the mass function $m$ and radius $r$.

 In a more realistic setting, Rendall~\cite{R} and more explicitly Andr\'easson, Kunze and Rein~\cite{AKR} considered the gravitational collapse of collision gas modeled by the Vlasov equation in Schwarzschild coordinates. In astrophysics this model is used to describe galaxies and globular clusters. The solutions of the Einstein--Vlasov system are smooth, and no singularities occur for small initial data. Andr\'easson and Rein proved the formation of trapped surfaces in generalized Eddington--Finkelstein coordinates later in \cite{AR}. The benefit of the latter coordinates is that they can be used to cover the whole spacetime and do not break down at the event horizon. With the advanced null coordinate $v$ and area radius $r$, dynamical spherically symmetric spacetimes are of the form
 \begin{align}\label{EFcoord}
  \g = - a(v,r)b^2(v,r) dv^2 + 2 b(v,r) dv dr + r^2 (d\theta^2 + \sin^2 \theta d\varphi^2 ).
 \end{align}
 Asymptotic flatness is tied to the condition
 \begin{align}\label{AF}
  \lim_{r\to\infty} a(v,r) = \lim_{r\to\infty} b(v,r) = 1.
 \end{align}
 A trapped surface $\{v_\bullet\} \times \mathbb{S}^2(r_\bullet)$ is present if $a(v_\bullet,r_\bullet)<0$. Overall similar to the work of Christo\-doulou, the authors constructed suitable initial data leading to the formation of trapped surfaces out of spherically symmetric steady states at the center that are surrounded by a shell of matter moving inwards. The particle density was the key property that was adjusted to achieve this. Weak cosmic censorship holds for these data due to the work of Dafermos and Rendall~\cites{D,DR}.
 
 In the universe, black holes are expected to form when very massive stars collapse. In general relativity, stellar objects are described by a perfect fluid and modelled by the Einstein--Euler equations \eqref{Einstein}, where $T_{\mu\nu}$ is the energy-momentum tensor of a perfect fluid, given in terms of the pressure $p$, density $\rho$ and velocity vector field $u_\mu$, i.e.,
 \[
  T_{\mu\nu} = (\rho+ p) u_\mu u_\nu + p \, g_{\mu\nu}.
 \]
 One of the major difficulties still is to describe the matter-vacuum boundary during evolution \cites{BM,O}. In order to avoid this difficulty at first, LeFloch and the author studied the gravitational collapse of (spherically symmetric) perfect fluids with a priori infinite extent \cite{BLF}. More precisely, the linear equation of state,
 \[
  p = k^2 \rho,
 \]
 for $k \in (0,1)$ representing the (normalized) speed of sound, was employed. In \cite{BLF} it was shown that spherically symmetric steady states can be perturbed in an annular region by manipulating the normalized velocity in a way that while the initial data did not contain trapped surfaces, during the evolution trapped surfaces form. This approach also made use of the generalized Eddington--Finkelstein coordinates \eqref{EFcoord}, however, \eqref{AF} was not (could not) be used due to the unknown asymptotic behavior of static solutions. Thus rather than integrating from spatial infinity the analysis had to be restricted to an (albeit arbitrarily large but nevertheless) compact region. Only in recent work of Andersson and the author~\cite{AB} on spherically symmetric static solutions of the Einstein--Euler equations, it became ultimately clear that perfect fluid solutions with linear equation of state are not asymptotically flat and how \eqref{AF} needed to be modified in order to describe common perfect fluid solutions with infinite extent globally. The situation is different for equations of state that are only piecewise linear, e.g., as studied in the work of Christodoulou \cites{Chr1,Chr2,Chr3} and Fournodavlos and Schlue \cite{FS}, however, no results on 
 the formation of trapped surfaces are known in this setting and we will not discuss it further.
 
 On the following pages, we employ the geometric description derived in \cite{AB} to extend the trapping results for perfect fluids of \cite{BLF} to unbounded domains. We focus here on constructing admissible initial data, since the remaining local existence and trapping analysis based on a generalized random choice scheme and control during evolution can be carried over directly from \cite{BLF}.


\section{Construction of admissible initial data}

 The crucial step in \cite{BLF} is the construction of admissible initial data, that is, initial data that do not contain trapped surfaces but will evolve into solutions that do contain trapped surfaces during their time of existence. The nonexistence of trapped surfaces in the initial data is---in theory---easy to  achieve, since it only requires to check that $a(v_0,r) > 0$ at the initial time $v_0$ for all $r$ in question. In general, $a$ can be computed using the integral representation
\begin{align}\label{aint1}
 a(v,r) = 1 - \frac{4\pi(1+k^2)}{r} \int_0^r \frac{b(v,r')}{b(v,r)} M(v,r') ( 2 k^2 |V(v,r')| +1 ) r'^2 \, dr',
\end{align}
 where $M = b^2 \rho u^0 u^0$ is a normalized mass and $V = \frac{u^1}{bu^0} - \frac{a}{2}$ is a normalized velocity \cite{BLF}*{Sec.\ 2.3}. In practice, however, obtaining this positivity control on $a$ is nontrivial. We investigate this problem in detail.
 
 As mentioned in the Introduction, the idea to obtain admissible initial data is to construct static solutions and then introduce a large but localized perturbation to initiate trapped surface formation. Static solutions satisfy
  \[
  V_\textrm{static} = - \frac{a_\textrm{static}}{2},
 \]
 and do not contain trapped surfaces. The latter property should be preserved, to some extend, even with a large perturbation. Around the center $r=0$ the sign of $a$ is clearly positive due to the integral representation in \eqref{aint1}, however, this property may not hold for large $r$. This problem did not occur in the work of Andr\'easson and Rein~\cite{AR}*{Sec.\ 5}, because due to the asymptotically flat model they used, the ADM mass $M$ was finite and they could simply integrate $a$ from spatial infinity. For \[a(v,r) = 1 - \frac{2m(r)}{r},\] their integral representation \cite{AR}*{Eq.\ (5.2)} from infinity is determined by
 \begin{align}\label{mV}
  m(v,r) = \frac{M}{b(v,r)} - \frac{1}{2} \int_r^\infty 4 \pi \eta^2(T_{11}+S) e^{- \int_\eta^r 4 \pi \rho T_{11} \, d\sigma} \, d\eta,
 \end{align}
 where $S$ depends on the density, the conserved angular momentum and canonical momenta corresponding to the coordinates $(v,r,\theta,\varphi)$.
 
 In the setting of perfect fluids with linear equation of state an analogous integral representation of $a$ is not possible due to the infiniteness of the ADM mass of the static solution. Therefore, in \cite{BLF}, we restricted our attention to solutions on a bounded domain. Recently, Andersson and the author investigated the asymptotic behavior of the static solutions to perfect fluids models with linear and polytropic-type equations of state in more detail. In \cite{AB}*{Thm.\ 1.2} was established that the solutions for linear equations of state are, in fact, asymptotically conical with deficit angle\footnote{Note that in the notation of  \cite{AB} the squared (normalized) speed of sound is denoted by $K=k^2$.} $\alpha = \frac{4k^2}{(1+k^2)^2+4k^2}$ depending solely on the normalized speed of sound $k$. In a spacetime version, this behavior fits into the quasi-asymptotically set-up of Nucamendi and Sudarsky \cite{NS} (see also \cite{BV}), for which an alternative notion of ADM mass has been defined. This so-called ADM$\alpha$ mass is coordinate invariant and thus represents a geometric invariant, however, neither an analogue of the Positive Mass Theorem nor the fact that is constant over time have yet been established. A reasonable premise when dealing with perfect fluids with linear equation of state in general relativity would be to simply \emph{assume} that the solutions are quasi-asymptotically flat. For the kind of initial data we are interested in, this assumption is satisfied due to \cite{AB}*{Thm.\ 1.2} (compact perturbations do not change the asymptotic behavior) and we can replace the use of the integral representation \eqref{mV} in the Vlasov case involving the ADM mass $M$ by employing the deficit angle $\alpha$ in a suitable way.

 \subsection{Asymptotic behavior for static solutions revisited}\label{ssec1}
 
 In order to understand quasi-asymptotic flatness in terms of the metric representation in coordinates $(v,r,\theta,\varphi)$ we rewrite the static solution in these coordinates.
 
 \begin{lemma}[Static solutions in generalized Eddington--Finkelstein coordinates]\label{lem:sasymp}
 Static spherically symmetric solutions of the Einstein--Euler equations for linear equations of state $p=k^2\rho$ are of the form
 \begin{align}\label{gg}
  \g = - a(r) b^2(r) dv^2 + 2 b(r) dv dr + r^2 (d\theta^2 + \sin^2 \theta d\varphi^2)
 \end{align}
 with $a(r) = 1- \frac{2m(r)}{r}$ for the mass function $m$, conical angle $\alpha = \frac{4k^2}{(1+k^2)^2+4k^2}$ and decay
 \begin{align}
  \lim_{r\to\infty} a(r) &= 1-\alpha, \label{alimit} \\
  \lim_{r\to\infty} r^{-\frac{2k^2}{1+k^2}} b(r) &= \left( \frac{2\rho_0}{\pi\alpha} \right)^{\frac{k^2}{1+k^2}} \frac{1}{\sqrt{1-\alpha}}, \label{blimit}
  \end{align}
 \end{lemma}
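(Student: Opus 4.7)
The plan is to start from the curvature (Schwarzschild-type) coordinates in which \cite{AB}*{Thm.\ 1.2} is formulated and perform a standard tortoise-coordinate change to reach the generalized Eddington--Finkelstein form \eqref{gg}; both limits then follow from \cite{AB}*{Thm.\ 1.2} combined with the first integral of the static relativistic Euler equation.

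First, I would write the spherically symmetric static solution in the usual form
\[
 \g = -e^{2\nu(r)}\,dt^2 + \left(1-\tfrac{2m(r)}{r}\right)^{-1} dr^2 + r^2(d\theta^2+\sin^2\theta\,d\varphi^2),
\]
and introduce the advanced null coordinate $v = t + r^\ast$ with $dr^\ast/dr = e^{-\nu}(1-2m/r)^{-1/2}$. Substituting $dt = dv - dr^\ast$ and simplifying produces exactly \eqref{gg} with
\[
 a(r) = 1-\tfrac{2m(r)}{r}, \qquad b(r) = \frac{e^{\nu(r)}}{\sqrt{1-2m(r)/r}}.
\]
This settles the shape of the metric and reduces the lemma to asymptotic statements about $m$ and $\nu$; the limit \eqref{alimit} is then immediate from the conical asymptotic $2m(r)/r\to\alpha$ established in \cite{AB}*{Thm.\ 1.2}.

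For \eqref{blimit}, I would combine two ingredients. The relativistic hydrostatic equation $p'=-(\rho+p)\nu'$ specialized to $p=k^2\rho$ integrates to the Tolman-type first integral
\[
 e^{\nu(r)}\,\rho(r)^{k^2/(1+k^2)} = C_0,
\]
so that the asymptotics of $e^\nu$ are controlled by those of $\rho$. On the other hand, $m'=4\pi r^2\rho$ combined with $2m/r\to\alpha$ forces $4\pi r^2\rho(r)\to\alpha/2$, so that $e^{\nu(r)} \sim C_0\,(8\pi/\alpha)^{k^2/(1+k^2)}\,r^{2k^2/(1+k^2)}$. Dividing by $\sqrt{1-2m(r)/r}\to\sqrt{1-\alpha}$ then yields the claimed power law for $b$.

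The main obstacle, and what makes \eqref{blimit} a genuine verification rather than a one-line consequence of \cite{AB}*{Thm.\ 1.2}, is the bookkeeping of the multiplicative constant: one has to align the normalization of the time coordinate (equivalently, the additive constant in $\nu$) and the reference density $\rho_0$ used in \cite{AB} with those implicitly fixed by the form \eqref{gg}, so that the integration constant $C_0$ collapses to the exact prefactor $\left(2\rho_0/(\pi\alpha)\right)^{k^2/(1+k^2)}$ appearing in the statement.
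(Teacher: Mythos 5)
Your coordinate transformation is exactly the one used in the paper: the paper sets $v:=t+\int_0^r e^{\lambda(s)-\nu(s)}\,ds$ with $e^{\lambda}=(1-2m/r)^{-1/2}$, which is precisely your tortoise coordinate, and it arrives at the same identifications $a=e^{-2\lambda}=1-\frac{2m}{r}$ and $b=e^{\lambda+\nu}$. The limit \eqref{alimit} is likewise obtained in the same way from the asymptotics of $2m(r)/r$. (One small point the paper does address and you do not: the convergence of $\int_0^r e^{\lambda-\nu}\,ds$ at the lower limit $r=0$, which uses the regular behaviour of the metric coefficients at the centre.)

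The gap is in your derivation of \eqref{blimit}. First, the inference that ``$m'=4\pi r^2\rho$ together with $2m/r\to\alpha$ forces $4\pi r^2\rho\to\alpha/2$'' is a converse-L'H\^opital step that is not valid as stated: $m(r)/r\to\alpha/2$ does not imply $m'(r)\to\alpha/2$ without extra input such as monotonicity or the known convergence of all solutions to the singular solution. The paper sidesteps this entirely by quoting \cite{AB}*{Cor.\ 3.6}, which supplies the precise asymptotics of $(\rho_0/\rho(r))^{k^2/(1+k^2)}$ directly, constant included. Second, the prefactor you obtain, $(8\pi\rho_0/\alpha)^{k^2/(1+k^2)}$ after fixing $C_0=\rho_0^{k^2/(1+k^2)}$ by the central normalization $e^{\nu(0)}=1$, does not coincide with the stated $\left(2\rho_0/(\pi\alpha)\right)^{k^2/(1+k^2)}$ under the convention $m'=4\pi r^2\rho$; reconciling the two requires tracking the normalization of the density and of the field equations used in \cite{AB}, which is exactly the ``bookkeeping'' you identify as the main obstacle but leave unresolved. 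As written, the proposal therefore establishes the form \eqref{gg} and the limit \eqref{alimit}, but does not yet prove \eqref{blimit} with the claimed constant.
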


 \begin{proof}
  According to \cite{AB}*{Cor.\ 2.6} and \cite{AB}*{Cor.\ 3.6} solutions are of the form
  \[
   \g = - e^{2\nu(r)} dt^2 + e^{2\lambda(r)} dr^2 + r^2 (d\theta^2 \sin^2 \theta d\varphi^2)
  \]
  with
  \[
   e^{2\Lambda} := \lim_{r \to \infty} e^{2\lambda(r)} = \lim_{r\to\infty} \left(1 - \frac{2m(r)}{r}\right)^{-1} = \frac{(1+k^2)^2+4k^2}{(1+k^2)^2} = (1 - \alpha)^{-1}
  \]
  and
  $
   \nu'(r) = O(r^{-\frac{1}{2}}) \quad \text{as } r\to\infty.
  $
  We set \[ v := t + \int_0^r e^{\lambda(s) - \nu(s)} \, ds.\]
  Note that the integral converges because, as $r \to 0$ the asymptotic behavior is the metric coefficients is $e^{\lambda(r)} = \left( 1 -\frac{2m(r)}{r} \right)^{-\frac{1}{2}} \sim \frac{1}{\sqrt{1-r^2}} \to 1$ and $e^{\nu(r)} = \left( \frac{\rho_0}{\rho(r)} \right)^{\frac{k^2}{1+k^2}} \sim \left(\frac{\rho_0}{\rho_0}\right)^{\frac{k^2}{1+k^2}} = 1$ (cf.\ \cite{AB}*{(3.3) and Sec.\ 3.1}).
  Thus
  \[
   dv = dt + e^{\lambda(r)-\nu(r)} dr,
  \]
  and therefore
  \[
   e^{2\nu(r)} dt^2 = e^{2\nu(r)} dv^2 - 2e^{\lambda(r)+\nu(r)} dv dr + e^{2\lambda(r)} dr^2.
  \]
  The metric $\g$ in coordinates $(v,r,\theta,\varphi)$ thus is of the form
 \begin{align*}
 \g &= - e^{2\nu(r)} dv^2 + 2 e^{\lambda(r)+\nu(r)} dv dr + r^2 (d\theta^2 \sin^2 \theta d\varphi^2),
 \end{align*}
  which for
  \begin{align}
   b(r) = e^{\lambda(r)+\nu(r)} \quad \text{and} \quad a(r) = e^{-2\lambda(r)} = 1 -\frac{2m(r)}{r}
  \end{align}
  yields the desired form \eqref{gg}.
 By the above and by \cite{AB}*{Cor.\ 3.6} we obtain
 \begin{align*}
  \lim_{r\to\infty} a(r) &= \lim_{r\to\infty} 1 - \frac{2m(r)}{r} = 1 - \alpha, \\
  \lim_{r\to\infty} r^{-\frac{2k^2}{1+k^2}} b(r) &= \lim_{r\to\infty} r^{-\frac{2k^2}{1+k^2}} \left( \frac{\rho_0}{\rho(r)} \right)^{\frac{k^2}{1+k^2}} \left( 1 - \frac{2m(r)}{r} \right)^{-\frac{1}{2}} = \left( \frac{2\rho_0}{\pi\alpha} \right)^{\frac{k^2}{1+k^2}} \frac{1}{\sqrt{1-\alpha}}. \qedhere
 \end{align*}
 \end{proof}
 
 \begin{remark}
  The proof of the asymptotic behavior as $r\to\infty$ is based on the analysis in \cite{AB}. An explicit, so-called singular, solution of the static Einstein--Euler equations in spherical symmetry exists, to which all other solutions are asymptotic as $r \to \infty$. The density of this solution blows up at the center, hence the name ``singular solution''. In \cite{BKTZ} we have shown that this solution is, although singular, still surprisingly well-behaved in a way that it satisfies the second Bianchi identity weakly. The stability of this solution may be studied using metric convergence, e.g., in the sense of Gromov--Hausdorff convergence or Sormani--Wenger intrinsic flat convergence \cites{AlB,B,LFS,SV,SW}.
 \end{remark}

 In a general dynamic setting for the spherically symmetric Einstein--Euler equations with linear equation of state, one can reasonably assume that the initial data have the same asymptotic behavior as that obtained for static solutions in Lemma~\ref{lem:sasymp}. Since we ore only interested in initial data based on static solutions with a compact perturbation, this is not a restriction for our set-up in the next Section.

\subsection{Construction of admissible initial data}

 The idea is to construct admissible initial data for trapped surface formation on an \emph{unbounded} domain. The presentation is inspired by \cite{BLF}*{Sec.\ 6.2}, where an analogous result has been obtained for arbitrarily large but \emph{bounded} domains.

 Let us recall the set-up of \cite{BLF} for constructing admissible initial data for the spherically symmetric Einstein--Euler equations. The main goal was to observe the dynamic formation of trapped surface from untrapped initial data. The property that initial data do \emph{not} contain trapped surfaces requires that
 \begin{align}\label{apositive}
  a(v_0,r) > 0 \qquad \text{for all } r \geq 0
 \end{align}
 initially. In order to observe the formation of trapped surfaces, which corresponds to a sign change, i.e.,
 \[
  a(v_\bullet,r_\bullet) < 0 \qquad \text{for some } v_\bullet, r_\bullet >0,
 \]
 we need to make sure that the initial data, in addition to \eqref{apositive}, also satisfy
 \begin{align*}
  a_v(v_0,r) \ll 0 \qquad \text{for } r \in [r_*-\delta,r_*+\delta] \subseteq [0,\infty),
 \end{align*}
 meaning that the derivative is large and negative in a small region. In \cite{BLF} we proved \eqref{apositive} for arbitrarily large domains $[0,r_*+\Delta]$. The following result, based on on the asymptotic analysis of static solutions in Section~\ref{ssec1}, generalizes it to all of $[0,\infty)$. We start with a definition.
 
 \begin{definition}
  Let $(M^\s,V^\s,a^\s,b^\s)$ be a static solution of the spherically symmetric Einstein--Euler equations with linear equation of state and central density $\rho_0>0$. Let $r_* >0$, $\Delta \in (0,r_*)$ and $\delta \in (0, \Delta)$ and $h>0$ be given. We consider a perturbation of the normalized fluid velocity, defined by a step function
  \[
   V^\p (r) = \begin{cases}
               0 & r < r_* -\delta, \\
               \frac{V^\s(r)}{h} & r_* - \delta \leq r \leq r_* + \delta, \\
               0 & r> r_* +\delta.
              \end{cases}
  \]
  We call $(M_0,V_0,a_0,b_0)$ the \emph{$(r_*,\delta,h)$-perturbed initial data} if 
  \begin{align}
   M_0 = M^\s, \qquad
   V_0 = V^\s + V^\p, \qquad
   b_0 = b^\s,
  \end{align}
  and $a_0$ is given by the integral (cf.\ \cite{BLF}*{Eq.\ (6.9)})
  \begin{align}
  a_0(r) &= 1 - \frac{4 \pi (1+k^2)}{r} \int_0^r \frac{b_0(s)}{b_0(r)} M_0(s) \left( 2\frac{1-k^2}{1+k^2} |V_0(s)| + 1 \right) s^2 \, ds \nonumber \\ 
   &= 1 - \frac{4 \pi (1+k^2)}{r} \int_0^r \frac{b^\s(s)}{b^\s(r)} M^\s(s) \left( 1+ \frac{1-k^2}{1+k^2} \left( 1 +\frac{1}{h} \chi_{[r_*-\delta,r_*+\delta]} \right) a^\s(s) \right) s^2 \, ds. \label{a0}
  \end{align}
 \end{definition}
 
 \begin{theorem}\label{thm}
  Let $(M_0,V_0,a_0,b_0)$ be a $(r_*,\delta,h)$-perturbed initial data set to the spherically symmetric Einstein--Euler equations with linear equation of state $p=k^2\rho$, $k \in (0,1)$ and central density $\rho_0>0$. 
  Then there exist constants $C_1, C_2, C_3, C_4 > 0$ depending on $r_*>0$ and a fixed\footnote{We can also simply choose, for instance, $\Delta = \frac{r_*}{2}$ in order to avoid another parameter.} $\Delta \in (0,r_*)$
  such that for all $\delta,h>0$ with $\frac{\delta}{h} \leq \frac{1}{C_1}$ the following holds:
  \begin{align*}
  & 0 < a_0(r) \leq a^\s(r), \quad r \geq 0, \\
  & \partial_v a_0(r) \begin{cases}
                      = 0 & 0 \leq r < r_* - \delta, \\
                      <0 & r > r_* - \delta, \\
                      \leq - C_2 \frac{\delta}{h^3} & r_* - \delta \leq r  \leq r_* + \delta, \\
                      \leq - C_4 \frac{1}{h^2} & r_* - \delta \leq r  \leq r_* + \delta, \\
                      \leq - C_3 \frac{\delta}{h} & r \in (r_*+\delta, r_* + \Delta].
                     \end{cases}
  \end{align*}
 In particular, this initial data set does not contain trapped surfaces and $\partial_v a_0 \ll 0$ for suitably chosen $\delta$ and $h$.
 \end{theorem}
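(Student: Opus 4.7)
My plan is to decompose $[0,\infty)$ according to how the perturbation $V^\p$ enters \eqref{a0}, and to reduce the derivative bounds to the bounded-domain analysis of \cite{BLF}*{Sec.\ 6.2}. The genuinely new difficulty compared with the earlier bounded setting is to control $a_0$ at arbitrarily large $r$; the asymptotic rates in Lemma~\ref{lem:sasymp} are tailored for exactly this.

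Subtracting the static identity---obtained from \eqref{a0} by discarding the $\chi/h$ contribution---from \eqref{a0} itself yields the clean defect formula
\[
 a_0(r) - a^\s(r) = -\frac{4\pi(1-k^2)}{r\, b^\s(r)\, h} \int_{\max(0,\,r_*-\delta)}^{\min(r,\,r_*+\delta)} b^\s(s)\, M^\s(s)\, a^\s(s)\, s^2 \, ds.
\]
The right-hand side is nonpositive, which immediately gives $a_0 \leq a^\s$, and it vanishes identically for $r < r_* - \delta$; there $a_0 = a^\s > 0$ and $\partial_v a_0 = 0$ (the latter because $V^\p$ is supported above $r_*-\delta$ and the evolution equation in \cite{BLF}*{Sec.\ 2.3} at radius $r$ depends only on data on $[0,r]$). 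On the compact middle interval $[r_*-\delta, r_*+\Delta]$ the quantities $b^\s$, $1/b^\s$, $M^\s$ and $a^\s$ are all bounded above and below by constants depending only on $r_*$ and $\Delta$, so the right-hand side is dominated in absolute value by $C_{r_*}\, \delta/h$; requiring $\delta/h \leq 1/C_1$ with $C_1$ large enough keeps $a_0 \geq a^\s/2 > 0$ there.

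For $r > r_* + \Delta$ the full perturbation window sits inside the integration domain, so the integral is a constant of order $\delta$ depending only on $r_*$ and the defect is controlled by $C'_{r_*}\, \delta / (r\, b^\s(r)\, h)$. By Lemma~\ref{lem:sasymp}, $r\, b^\s(r) \sim c\, r^{(1+3k^2)/(1+k^2)} \to \infty$ while $a^\s(r) \to 1-\alpha > 0$, so the defect decays polynomially while $a^\s$ stays uniformly bounded below. Hence there is a threshold $R_0$, depending only on $k$ and $r_*$, with $|a_0(r) - a^\s(r)| \leq a^\s(r)/2$ on $[R_0, \infty)$ whenever $\delta/h \leq 1/C_1$; the remaining compact slab $[r_*+\Delta, R_0]$ is absorbed into the previous step by possibly enlarging $C_1$. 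This closes the proof of $0 < a_0 \leq a^\s$ on all of $[0,\infty)$.

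For the $\partial_v a_0$ estimates I would differentiate the analogous integral representation of $a$ along the evolution equations in \cite{BLF}*{Sec.\ 2.3} and follow \cite{BLF}*{Sec.\ 6.2} line by line: the pointwise $1/h^2$ bound on the perturbation window should come from a quadratic dependence of $\partial_v a$ on $V_0 - V^\s = V^\s/h$; the $\delta/h$ and $\delta/h^3$ bounds come from integrating $1/h$ and $1/h^3$ contributions over a window of length $2\delta$; and $a^\s, b^\s, M^\s$ supply the uniform $r_*$-dependent constants on $[0, r_*+\Delta]$. Strict negativity on the far tail $r > r_*+\Delta$ is inherited without a quantitative rate from the same defect-type formula, the relevant integrand being pointwise positive and divided by $r\, b^\s(r)\, h > 0$. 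The main obstacle is the positivity tail, not the derivative part: without the precise polynomial growth $b^\s(r) \sim r^{2k^2/(1+k^2)}$ from Lemma~\ref{lem:sasymp} one could not rule out a second sign change of $a_0$ far from the perturbation window, and that is precisely what separates the present unbounded setup from the bounded result in \cite{BLF}.
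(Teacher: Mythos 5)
Your argument is correct and follows the paper's overall strategy quite closely: the same defect formula for $a^\p = a_0 - a^\s$ (the paper's \eqref{e1}), smallness of the defect measured by $\delta/h$, and the same pointwise identity \eqref{e2} for $\partial_v a_0$ (which also gives the vanishing for $r<r_*-\delta$ and strict negativity for $r>r_*-\delta$ directly), with the quantitative derivative bounds deferred to \cite{BLF}*{Prop.\ 6.1} exactly as the paper does. The one place where you genuinely diverge is the treatment of the tail $r\to\infty$ in the positivity step. You split $[r_*-\delta,\infty)$ into a compact piece and a far tail, use the divergence of $r\,b^\s(r)\sim r^{(1+3k^2)/(1+k^2)}$ to make the defect decay on the tail, and absorb the intermediate slab $[r_*+\Delta,R_0]$ by compactness. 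The paper avoids the threshold $R_0$ altogether: since $b^\s\geq 1$ one has $1/(r\,b^\s(r))\leq 1/(r_*-\delta)$, so $0<-a^\p(r)\leq C\,\delta/h$ \emph{uniformly} for all $r\geq r_*-\delta$, while monotone decrease of $a^\s$ together with the limit \eqref{alimit} gives the uniform lower bound $a^\s(r)\geq 1-\alpha>0$; the single condition $\delta/h\leq(1-\alpha)/C$ then closes the argument in one stroke and yields an explicit $C_1$. This also shows that your closing diagnosis is slightly misplaced: what rules out a distant sign change of $a_0$ is not the polynomial growth rate of $b^\s$ from \eqref{blimit} (only $b^\s\geq 1$ is used there), but the uniform positive lower bound on $a^\s$ supplied by \eqref{alimit} plus monotonicity. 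Both routes are valid; the paper's is shorter and keeps the constants explicit.
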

 
 \begin{proof}
  We proceed as in the proof of \cite{BLF}*{Prop.\ 6.1}. The major difference is Step 1, and we also generalize Step 2 and add an additional Step 5. Steps 3 and 4 can be obtained in the same fashion for a fixed $\Delta \in (0,r_*)$ (or simply $\Delta := \frac{r_*}{2}$).
 
 \smallskip
  {\bf Step 1. Positivity of $a_0$.} Static solutions do not contain trapped surfaces, and thus $a^\s$ is positive throughout. Due to \eqref{a0}, this immediately implies that
  \[
   a_0(r) = a^\s(r) > 0 \qquad \text{for all } r<r_*-\delta.
  \]
  Let $r\geq r_*-\delta$. Then, by \eqref{a0} and for $a^\p := a_0 - a^\s$,
  \begin{align}
   a_0(r) &= a^\s(r) + a^\p(r) \nonumber \\
          &= a^\s(r) - \frac{4\pi(1-k^2)}{rh} \int_{r_*-\delta}^{\min(r,r_*+\delta)} \frac{b^\s(s)}{b^\s(r)} M^\s(s) a^\s(s) s^2 \, ds \nonumber \\
          &\geq a^\s(r) - \frac{4\pi(1-k^2)}{rh} \int_{r_*-\delta}^{r_*+\delta} \frac{b^\s(s)}{b^\s(r)} M^\s(s) a^\s(s) s^2 \, ds, \label{e1}
  \end{align}
  since $M^\s,b^\s,a^\s > 0$. By Lemma~\ref{lem:sasymp}, and the fact that $a$ is monotonically decreasing (cf.\ \cite{BLF}*{Sec.\ 4}) we know that
  \[
   a^\s(r) > 1 - \alpha > 0, \qquad \text{for all } r \geq 0,
  \]
  where $\alpha=\frac{4k^2}{(1+k^2)^2+4k^2}$ is a constant strictly less than $1$ for all $k \in [0,1]$. It thus remains to be shown that the integral term in \eqref{e1} is less than $1-\alpha$. We show that this can be achieved for certain ratios of $\delta$ and $h$. Since, as $r \to \infty$, $b^\s \geq 1$ is increasing and $\rho_0 \geq \rho^\s = a^\s M^\s > 0$ (cf.\ \cite{BLF}*{Eq.\ (4.5)}) is monotonically decreasing by \cite{BLF}*{Thm.\ 4.3}) we obtain that
  \begin{align*}
   0 < - a^\p(r) &\leq \frac{4\pi(1-k^2)}{rh} b^\s(r_*+\delta) a^\s(r_*-\delta) M^\s(r_*-\delta) \left[ \frac{r^3}{3} \right]_{r_*-\delta}^{r_*+\delta} \\
   &\leq \frac{8\pi(1-k^2)}{3} \frac{\delta}{h} \frac{\delta^2+3r_*^2}{r_*-\delta} b^\s(r_*+\delta) \rho_0. 
  \end{align*}
  Without loss of generality we may assume that $\delta \leq \min\{\frac{r_*}{2},\Delta\}$, hence $\frac{\delta^2+3r_*^2}{r_*-\delta} \leq \frac{13r_*}{2}$, so that we obtain
  \[
   0<- a^\p(r) \leq \frac{52 \pi(1-k^2)}{3} \rho_0 \, r_* b^\s\left(\tfrac{3r_*}{2}\right) \frac{\delta}{h}
  \]
  Thus for $\frac{\delta}{h}$ sufficiently small, more precisely, for $\frac{\delta}{h} \leq \frac{1}{C_1}$ with $C_1(r_*,\rho_0,k) := \frac{52 \pi(1-k^2)}{3} \rho_0 \, r_* b^\s\left(\frac{3r_*}{2}\right)(1-\alpha)^{-1}$, we thus obtain that
 \[
   - a^\p(r) \leq 1-\alpha.
 \]
  Therefore, for any $r \geq r_*-\delta$, we have
  \[
   a_0(r) = a^\s(r) + a^\p(r) > 1-\alpha -(1-\alpha) =0.
  \]
  Thus
  \[
   a_0(r) > 0  \qquad \text{for all } r\geq 0,
  \]
  and hence the initial datum does not contain trapped surfaces. 
  
  \smallskip
  {\bf Step 2. Negativity of $\partial_v a_0$.} By \cite{BLF}*{Eq.\ (3.3)} we know that $a$ must satisfy
  \begin{align*}
   a_v(v_0,r) = 2\pi r b^\s(r) M^\s(r) (a_0^2(r) - 4 V_0^2(r)).
  \end{align*}
  By \cite{BLF}*{Thm.\ 4.3}, static solutions satisfy $0 < a^\s = - 2V^\s \leq 1$. Then \eqref{a0} implies\footnote{Note that the calculation \cite{BLF}*{Eq.\ (6.13)} contains two minor typos.}, for any $r \geq 0$,
  \begin{align}\label{e2}
   a_v(v_0,r) &= 2\pi r b^\s(r) M^\s(r) \left((a^\s(r)+a^\p(r))^2 - \left[ a^\s(r) (1+\tfrac{1}{h} \chi_{[r_*-\delta,r_*+\delta]}(r))\right]^2 \right) \nonumber \\
   &= 2\pi r b^\s(r) M^\s(r) \left( a^\p(r) (a_0(r)+a^\s(r)) - \chi_{[r_*-\delta,r_*+\delta]} (a^\s(r))^2 \frac{2h+1}{h^2} \right).
  \end{align}
 Since $a^\s$ and $a_0$ are positive for all $r>0$ by Step 1, and $a^\p$ is negative for $r>r_*-\delta$ by construction, we have that
 \[
  a_v(v_0,r) < 0, \qquad \text{for all } r > r_*-\delta.
 \]

  \smallskip
  {\bf Step 3 and 4. Bounds for $\partial_v a_0$.} One can proceed as in \cite{BLF}*{Prop.\ 6.1} to obtain these bounds.
  
\smallskip
  {\bf Step 5. Additional bound for $\partial_v a_0$ on $[r_*-\delta,r_*+\delta]$.} As in Step 4 of \cite{BLF} one obtains
  \[
   a_v(v_0,r) \leq - 2\pi r b^\s(r) M^\s(r) (a^\s(r))^2 \frac{2h+1}{h^2}.
  \]
  Since $b^\s \geq 1$ is increasing and $\rho^\s = M^\s a^\s$ is decreasing, and $a^\s \geq 1-\alpha$,
  \begin{align*}
   a_v(v_0,r) &\leq - 2\pi (r_*-\delta) \rho^\s(r_*+\delta) (1-\alpha) \frac{2h+1}{h^2} \\
   &\leq - \frac{C_4}{h^2}, \qquad\qquad\qquad\qquad \text{for all } r \in [r_*-\delta,r_*+\delta],
  \end{align*}
  where $C_4$ depends on $r_*$, $\delta$ (or $\Delta$, $r_*$),$k$, and $\rho_0$.
 \end{proof}

  Compared to \cite{BLF}*{Prop.\ 6.1}, the above Theorem~\ref{thm} establishes three additional properties. We have shown that
  \begin{enumerate}
   \item $a_0$ is positive for \emph{all} $r \geq 0$ (and not just up to some $r_*+\Delta$),
   \item $a_v < 0$ for \emph{all} $r > r_*-\delta$ (and not just up to some $r_*+\Delta$),
   \item $a_v \leq - C_4 \frac{1}{h^2}$ for $r \in [r_*-\delta,r_*+\delta]$ holds (in addition to the estimate $a_v \leq - C_3 \frac{\delta}{h^3}$).
  \end{enumerate}
  Property (i), in particular, shows that admissible initial data can be constructed that do \emph{not} contain trapped surfaces on the \emph{unbounded domain} $\RR^3$. All other properties of \cite{BLF}*{Prop.\ 6.1} are preserved, so that the same procedure as in \cite{BLF}*{Sec.\ 6 and 7} establishes the dynamic formation of trapped surfaces. The above Theorem~\ref{thm} thus generalizes \cite{BLF} to unbounded domains. For an exact formulation with all assumptions we refer the reader to \cite{BLF}*{Thm.\ 6.4}.
 
 \begin{corollary}
  The initial value problem for the spherically symmetric Einstein--Euler equations with linear equation of state for a class of $(r_*,\delta,h)$-perturbed initial data sets, prescribed on an unbounded Cauchy surface, leads to solutions with bounded variation with the following properties:
  \begin{enumerate}
   \item The spacetime is a spherically symmetric, future development of the initial data set.
   \item The initial hypersurface does not contain trapped surfaces.
   \item The spacetime does contain trapped surfaces.
  \end{enumerate}
 \end{corollary}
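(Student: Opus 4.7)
The plan is to assemble the corollary as a direct consequence of Theorem~\ref{thm} combined with the local existence scheme and the trapping analysis already developed in \cite{BLF}. Concretely, for any parameters $r_*>0$, $\Delta\in(0,r_*)$, $\delta\in(0,\Delta)$ and $h>0$ with $\delta/h\leq 1/C_1$, Theorem~\ref{thm} delivers a spherically symmetric initial data set $(M_0,V_0,a_0,b_0)$ on the entire half-line $[0,\infty)$ (equivalently, on a Cauchy hypersurface diffeomorphic to $\RR^3$) such that $a_0>0$ everywhere while $\partial_v a_0$ is sharply negative on the perturbation annulus $[r_*-\delta,r_*+\delta]$. The class in the statement is precisely those triples $(r_*,\delta,h)$ compatible with the smallness conditions of Theorem~\ref{thm} together with the additional smallness in $h$ that will be fixed in the trapping step below. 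Property (ii) is then immediate from the first line of Theorem~\ref{thm}.

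To obtain (i), I would run the generalized random choice (Glimm-type) scheme of \cite{BLF}*{Sec.~5} on this initial data. That scheme was set up for arbitrarily large but bounded radial intervals; the ingredients that enter are BV bounds on $(V,a,b,M)$, which in the present setting follow from the precise pointwise decay of the background $(V^\s,a^\s,b^\s)$ supplied by Lemma~\ref{lem:sasymp} together with the compact support of the perturbation $V^\p$. Passing to the limit in the scheme yields a spherically symmetric BV future development of $(M_0,V_0,a_0,b_0)$, which is property (i). Property (iii) then follows from the characteristic-based trapping argument of \cite{BLF}*{Sec.~7}: one propagates the quantitative lower bound $\partial_v a_0 \leq -C_4/h^2$ on $[r_*-\delta,r_*+\delta]$ along ingoing null geodesics using $a_v = 2\pi r bM(a^2-4V^2)$, and for $h$ sufficiently small forces $a$ to cross zero at some $(v_\bullet,r_\bullet)$, producing the trapped sphere $\{v_\bullet\}\times\mathbb{S}^2(r_\bullet)$.

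The main obstacle I expect is verifying that the random choice scheme and the characteristic trapping estimates of \cite{BLF}, originally phrased on a bounded radial interval $[0,r_*+\Delta]$, transfer to $[0,\infty)$ with no loss of control. The key observation that should render this routine is that the perturbation $V^\p$ is compactly supported, so on any finite advanced-time slab the deviation from the static background is confined to a compact annulus by the finite propagation speed associated with $p=k^2\rho$, $k<1$; outside that annulus the evolution coincides with the static solution, for which the asymptotic estimates \eqref{alimit}--\eqref{blimit} of Lemma~\ref{lem:sasymp} furnish exactly the bounds needed to close a global BV estimate uniformly in the Glimm time step. Making this domain-of-dependence argument quantitative, and checking that the perturbation does not enlarge its support enough during the formation time of the trapped surface to reach the region where the quasi-asymptotic behavior of Lemma~\ref{lem:sasymp} takes over, is the only genuinely non-routine step; everything else is a reference to \cite{BLF}*{Thm.~6.4}.
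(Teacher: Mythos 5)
Your proposal matches the paper's treatment: the corollary is obtained exactly as you describe, by combining Theorem~\ref{thm} (which gives properties (ii) and the quantitative negativity of $\partial_v a_0$) with the generalized random choice scheme and trapping analysis of \cite{BLF}*{Sec.\ 6 and 7}, to which the paper simply defers via \cite{BLF}*{Thm.\ 6.4}. In fact you are more explicit than the paper about the one genuinely non-routine point---justifying that the bounded-domain evolution and BV estimates of \cite{BLF} transfer to $[0,\infty)$ via compact support of the perturbation and finite propagation speed---which the paper asserts ``can be carried over directly'' without elaboration.
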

 
 \begin{remark}[Generalization to other equations of state]
While no analysis on the formation of trapped surfaces for perfect fluid models 
have yet been performed for equations of state other than the linear one (even in spherical symmetry), the asymptotic behavior of static solutions w.r.t.\ polytropic-type equations of state, that is, equations of state of the form $p = K \rho^{\frac{n+1}{n}}$ with polytropic index $n>5$, has also been described by Andersson and the author in \cite{AB}. These static solutions also have infinite extend and are also not asymptotically flat. Eventually, of course, one would be interested to study the formation of trapped surfaces for \emph{bounded} fluid balls (models of stars). At the moment, this seems out of reach, as no suitable setting is yet available to study such evolution problems with a fluid--vacuum boundary, but may become available in the future \cite{O}.
 \end{remark}

\section{From trapped surfaces to black holes}

While the Penrose Singularity Theorem discussed in the Introduction would yield a singularity based on the existence of a closed trapped surface, this result requires a metric regularity of $C^2$ (and also a generalization requires at least $C^{1,1}$ \cite{KSV}). In \cite{BLF} solutions of bounded variation have been obtained which do not guarantee this regularity for all available derivatives. As such, the Singularity Theorems known today are not directly applicable. It may be possible to either extend the Singularity Theorems or to improve the regularity along the lines of \cite{RT,Rei} of the solutions obtained in \cite{BLF}. 


\end{document}